\newtheorem{theorem}{Theorem}[section]
\theoremstyle{definition}
\theoremstyle{remark}
\newtheorem{notation}[theorem]{Notation}
\numberwithin{equation}{section}
\begin{document}

\newcommand{\spacing}[1]{\renewcommand{\baselinestretch}{#1}\large\normalsize}
\spacing{1.14}

\title{Two new families of Finsler connections on even-dimensional manifolds}

\author {H. R. Salimi Moghaddam}

\address{Department of Mathematics\\ Faculty of  Sciences\\ University of Isfahan \\ Isfahan\\ 81746-73441-Iran.} \email{hr.salimi@sci.ui.ac.ir and salimi.moghaddam@gmail.com}

\keywords{pseudo-Finsler manifold, Finsler connection, almost hypercomplex structure. \\
AMS 2010 Mathematics Subject Classification: 53C60, 53B05, 53C15.}


\begin{abstract}
Let $F^{2n}=(M,M',F^{\ast})$ be an even-dimensional pseudo-Finsler
manifold. We construct an almost hypercomplex structure on any
chart domain of a certain atlas of $M'$ by using a considered
non-linear connection. Then by using the almost hypercomplex
structure we define two new families of Finsler connections. Also
we show that for any Finsler connection $\nabla$ there exists a
linear connection $D$ such that the local almost hypercomplex
structure is parallel with respect to it.
\end{abstract}

\maketitle


\section{\textbf{Introduction}}

A. Bejancu and H.R. Farran, in \cite{BeFa} and \cite{BeFa1} , for
a pseudo-Finsler manifold $F^m=(M,M',F^{\ast})$ with a non-linear
connection $HM'$ and any two skew-symmetric Finsler tensor fields
of type $(1, 2)$ on $F^m$, introduced a notion of Finsler
connections which named "$(HM', S, T)-$Cartan connections". After
them in \cite{EsSa} we reconstruct the same Finsler connections by
using almost complex structures.\\
On the other hand almost hypercomplex and hypercomplex structures
which are important in differential geometry have many interesting
and effective applications in theoretical physics. For example the
background objects of HKT-geometry are hypercomplex manifolds.
These spaces appear in $N=4$ supersymmetric model (see
\cite{GiPaSt, Po}.). Applications of Riemannian metrics on these
spaces persuade us to study the geometry of Finsler metrics on
them (see \cite{Sa1, Sa2}).\\
In the present paper we study the relations between Finsler
structures and almost hypercomplex structures in a different
viewpoint. We use the almost hypercomplex structures to construct
new Finsler connections on even-dimensional pseudo-Finsler
manifolds . For this purpose we construct a local almost
hypercomplex structure by using a considered non-linear
connection. Then by using the almost hypercomplex structure we
define two new families of Finsler connections. Also we show that
for any Finsler connection $\nabla$ there exists a linear
connection $D$ such that the local almost hypercomplex structure
is parallel with respect to it.

\section{\textbf{Preliminaries and notations}}
Assume that $M$ is a real $m$-dimensional smooth manifold and $TM$
is the tangent bundle of $M$. Let $M'$ be a nonempty open
submanifold of $TM$ such that $\pi(M')=M$ and $\theta(M)\cap
M'=\emptyset$, where $\theta$ is the zero section of $TM$. Suppose
that $F^m=(M,M',F^{\ast})$ is a pseudo-Finsler manifold where
$F^{\ast}: M'\longrightarrow {\Bbb{R}}$ is a smooth function which
satisfies the following conditions in any coordinate system
$\{({\mathcal{U}}',\Phi'):x^i,y^i\}$ in $M'$, :

\begin{itemize}
    \item $F^\ast$ is positively homogeneous of degree two with
    respect to $(y^1,\dots,y^m)$, i.e., we have
    \begin{eqnarray*}
      F^\ast(x^1,\dots,x^m,ky^1,\dots,ky^m)=k^2F^\ast(x^1,\dots,x^m,y^1,\dots,y^m)
    \end{eqnarray*}
    for any point $(x,y)\in({\mathcal{U}}',\Phi')$ and $k>0$.
    \item At any point $(x,y)\in({\mathcal{U}}',\Phi')$, $g_{ij}$
    are the components of a quadratic form on ${\Bbb{R}}^m$ with
    $q$ negative eigenvalues and $m-q$ positive eigenvalues,
    $0<q<m$ (see \cite{BeFa}).
\end{itemize}

Consider the tangent mapping $\pi_\ast: TM'\longrightarrow TM$ of
the projection map $\pi:M'\longrightarrow M$ and def\/ine the
vector bundle $VM'=\ker \pi_\ast$. A complementary distribution
$HM'$ to $VM'$ in $TM'$ is called a non-linear connection or a
horizontal distribution on $M'$
\begin{eqnarray}
  TM'=HM'\oplus VM'.
\end{eqnarray}

In local coordinates let $\{\frac{\delta}{\delta
x^i}=\frac{\partial}{\partial
x^i}-N^j_i(x,y)\frac{\partial}{\partial y^j},
\frac{\partial}{\partial y^i} \}, \ \ (i,j=1\cdots m)$ be the
canonical basis for $TM'$. Let $F^m=(M,M',F^{\ast})$ be a
pseudo-Finsler manifold. Then a Finsler connection on $F^m$ is a
pair $FC=(HM',\nabla)$ where $HM'$ is a non-linear connection on
$M'$ and $\nabla$ is a linear connection
on the vertical vector bundle $VM'$ (see \cite{BeFa}).\\

An almost hypercomplex manifold is a $4n$-dimensional manifold $M$
with three globally-defined almost complex structures $J_i,
i=1,2,3,$ satisfying the quaternion identities
\begin{eqnarray}
  J_1J_2&=& -J_2J_1=J_3, \label{JJ} \\
  J_i^2 &=& -Id_{TM},  \ \ \ \ \ \ \ i=1,2,3.
\end{eqnarray}

\begin{notation}
From now on we suppose that $F^{2n}=(M,M',F^{\ast})$ is an
even-dimensional pseudo-Finsler manifold. We use $h$ and $v$ for
the projections on $HM'$ and $VM'$, respectively. Also we use
$\delta_i$ and $\partial_i$ as $\frac{\delta}{\delta x_i}$ and
$\frac{\partial}{\partial y_i}$, respectively.

Throughout the article, we use the following rules for the
indices.
\begin{itemize}
    \item The indicant $a$ is equal to $1$ and $3$, only.
    \item Latin indices (except the alphabets $a$ and $n$) run from $1$ to
    $2n$.
    \item Greek indices $\alpha, \beta, \gamma$ and $\theta$ run from $1$ to $n$.
    \item Any repeated pair of indices (except the alphabets $a$ and $n$), provided that one is up
    and the other is down, is automatically summed.
    \item The matrix $(g^{ij})$ is the matrix inverse of
    $(g_{ij})$.
\end{itemize}

\end{notation}

\section{\textbf{New Finsler connections on even-dimensional manifolds}}

In this section we construct two new families of Finsler
connections on $M'$. For this reason we consider an atlas on $M'$,
also we suppose that $HM'$ is a non-linear connection on $M'$.
Then by using the non-linear connection we define an almost
hypercomplex structure on any chart. After this step we define new
Finsler connections on any chart by using the almost hapercomplex
structure . Finally it is enough to paste the local connections by
a partition of unity to have a connection on $M'$.\\

Let $\mathcal{A}$ be an atlas on $M'$ and
$({\mathcal{U}}',\Phi')\in\mathcal{A}$. We construct the following
almost hypercomplex structure on ${\mathcal{U}}'$ by using
non-linear connection $HM'$ .
\begin{eqnarray}
 \begin{array}{ccc}\label{J}
  J_1(\delta_\alpha)=\partial_\alpha & J_2(\delta_\alpha)=\delta_{n+\alpha} & J_3(\delta_\alpha)=-\partial_{n+\alpha} \\
  J_1(\delta_{n+\alpha})=-\partial_{n+\alpha} & J_2(\delta_{n+\alpha})=-\delta_\alpha & J_3(\delta_{n+\alpha})=-\partial_\alpha \\
  J_1(\partial_\alpha)=-\delta_\alpha & J_2(\partial_\alpha)=\partial_{n+\alpha} & J_3(\partial_\alpha)=\delta_{n+\alpha} \\
  J_1(\partial_{n+\alpha})= \delta_{n+\alpha}& J_2(\partial_{n+\alpha})=-\partial_\alpha & J_3(\partial_{n+\alpha})=\delta_\alpha \\
\end{array}
\end{eqnarray}
It is easy to show
\begin{eqnarray}
  J_1^2&=&J_2^2 = J_3^2 = -Id_{T{\mathcal{U}}'} \\
  J_3 &=& J_1\circ J_2 = -J_2\circ J_1
\end{eqnarray}
Now by using $J_1$ and $J_3$ we construct two linear connections
on ${\mathcal{U}}'$ as follows.
\begin{theorem}\label{D}
Let $\nabla$ be a Finsler connection on ${\mathcal{U}}'$. The
differential operators $D^a$, $a=1, 3$, which are defined by
\begin{equation}\label{D13}
    D_X^aY:=\nabla_XvY-J_a\nabla_XJ_ahY \ \ \ \ \ \ \ \ \ \forall
    X,Y\in\Gamma(T{\mathcal{U}}')
\end{equation}
are two linear connections on ${\mathcal{U}}'$. Also $J_1$ and
$J_3$ are parallel with respect to $D^1$ and $D^3$, respectively.
\end{theorem}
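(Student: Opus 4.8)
The plan is to verify the three defining axioms of a linear connection for each operator $D^a$, and then to check the parallelism identity $D^a_X(J_aY)=J_aD^a_XY$ by direct computation. The structural fact driving everything is that, for $a=1,3$, the table (\ref{J}) shows $J_a$ \emph{interchanges} the horizontal and vertical distributions, i.e. $J_a(\Gamma(HM'))\subseteq\Gamma(VM')$ and $J_a(\Gamma(VM'))\subseteq\Gamma(HM')$ (this fails for $J_2$, which preserves each factor, explaining why $a$ is restricted to $1$ and $3$). I would first point out that this interchange is exactly what makes the right-hand side of (\ref{D13}) well defined: $J_ahY$ is then vertical, so $\nabla_X J_ahY$ makes sense since $\nabla$ is a connection on $VM'$, and $J_a\nabla_X J_ahY$ lands back in $HM'$; together with $\nabla_X vY\in\Gamma(VM')$ the output $D^a_XY$ is a genuine section of $T\mathcal{U}'$.

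For the connection axioms, additivity in both arguments and $C^\infty(\mathcal{U}')$-linearity in $X$ are immediate, because $\nabla$ is additive and $C^\infty$-linear in its subscript while $v$, $h$, $J_a$ are all $C^\infty$-linear; I would dispatch these in one line. The only axiom requiring the quaternionic identity is the Leibniz rule in $Y$. Taking $f\in C^\infty(\mathcal{U}')$ and using $v(fY)=f\,vY$, $h(fY)=f\,hY$, the Leibniz rule for $\nabla$, and $C^\infty$-linearity of $J_a$, the term $-J_a\nabla_X J_a h(fY)$ expands to $-(Xf)J_a^2hY-fJ_a\nabla_X J_ahY$. Here I would invoke $J_a^2=-Id$ to rewrite $-(Xf)J_a^2hY=(Xf)hY$; combined with the $(Xf)vY$ produced by $\nabla_X v(fY)$ this gives $(Xf)(hY+vY)=(Xf)Y$, so that $D^a_X(fY)=(Xf)Y+fD^a_XY$. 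This is the one place where $J_a^2=-Id$ is genuinely needed.

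For parallelism I would compute the two sides of $D^a_X(J_aY)=J_aD^a_XY$ independently and match them. The essential bookkeeping is that the interchange property yields $v(J_aY)=J_ahY$ and $h(J_aY)=J_avY$. Substituting into (\ref{D13}) and using $J_a^2=-Id$ gives $D^a_X(J_aY)=\nabla_X(J_ahY)+J_a\nabla_X vY$; on the other hand, applying $J_a$ to (\ref{D13}) and again using $J_a^2=-Id$ gives $J_aD^a_XY=\nabla_X(J_ahY)+J_a\nabla_X vY$ as well, so the two expressions coincide and $J_a$ is $D^a$-parallel. Since the argument uses only $J_a^2=-Id$ and the interchange of $HM'$ and $VM'$, it is identical for $a=1$ and $a=3$, yielding $D^1J_1=0$ and $D^3J_3=0$ respectively.

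The computations here are routine; there is no deep obstacle, and the part I would double-check most carefully is the $h$/$v$ bookkeeping after applying $J_a$, namely that $J_a$ sends the horizontal part of $Y$ to the vertical part of $J_aY$ and the vertical part to the horizontal part. Misplacing a projection there is the only way the cancellations can fail, so I would keep the decomposition $Y=hY+vY$ explicit throughout rather than manipulating $J_a$ abstractly.
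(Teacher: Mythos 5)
Your proof is correct, and since the paper dismisses this theorem with ``the proof is easy so we omit it,'' you are supplying exactly the routine verification the author had in mind: the interchange property $J_a(HM')\subseteq VM'$, $J_a(VM')\subseteq HM'$ for $a=1,3$ is indeed what makes $\nabla_X J_a hY$ meaningful for a connection $\nabla$ on $VM'$, the identity $J_a^2=-Id$ is the only nontrivial input for the Leibniz rule, and your computation $D^a_X(J_aY)=\nabla_X(J_ahY)+J_a\nabla_X vY=J_aD^a_XY$ via $v(J_aY)=J_ahY$ and $h(J_aY)=J_avY$ is exactly right. No gaps.
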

\begin{proof}
The proof is easy so we omit it.
\end{proof}
Let $T^{D^a}$, $a=1, 3$, denotes the torsion tensor of $D^a$. It
is simple to see,
\begin{eqnarray}\label{torsion of D13}
  T^{D^a}(X,Y)&=&(\nabla_XvY-\nabla_YvX-v[X,Y])-\nonumber\\
  &&J_a(\nabla_XJ_ahY-\nabla_YJ_ahX-J_ah[X,Y]) \ \ \  \forall
    X,Y\in\Gamma(T{\mathcal{U}}').
\end{eqnarray}

Now we can introduce two families of new Finsler connections by using $J_1$ and $J_3$.
\begin{theorem}
Suppose that $HM'$ is a non-linear connection on $M'$ and $S$ and
$T$ are two arbitrary skew-symmetric Finsler tensor fields of type
$(1,2)$ on $F^{2n}$. Then there exists a unique linear connection
$\nabla^a$ on $V{\mathcal{U}}'$ satisfying the conditions:
\begin{enumerate}
    \item $\nabla^a$ is a metric connection.
    \item $T^{D^a}$, $S$ and $T$ satisfy,\begin{description}
        \item[i] $T^{D^a}(vX,vY)=S(vX,vY)$,
        \item[ii] $hT^{D^a}(hX,hY)=J_aT(J_ahY,J_ahX)$.
    \end{description}
\end{enumerate}
\end{theorem}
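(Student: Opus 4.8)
The plan is to construct $\nabla^a$ through its coefficients on the canonical frame $\{\delta_i,\partial_i\}$ and to reduce the requirements (1) and (2) to a system solved by a Koszul-type formula, exactly as in the fundamental theorem of pseudo-Riemannian geometry. The first step is to note that, since $J_a$ interchanges $HM'$ and $VM'$ while $\nabla^a$ is a connection on $VM'$, the torsion formula (\ref{torsion of D13}) decouples: on two vertical arguments it reads $T^{D^a}(vX,vY)=\nabla^a_{vX}vY-\nabla^a_{vY}vX-v[vX,vY]$, whereas on two horizontal arguments one may strip the outer $J_a$ using $J_a^2=-\mathrm{Id}$, so that condition (2)(ii) becomes equivalent to
\begin{equation}\label{redii}
  \nabla^a_{hX}J_ahY-\nabla^a_{hY}J_ahX = T(J_ahX,J_ahY)+J_a\,h[hX,hY].
\end{equation}
Thus (i) constrains only the vertical coefficients $\nabla^a_{\partial_i}\partial_j$ and (ii) only the horizontal coefficients $\nabla^a_{\delta_i}\partial_j$, and the two halves can be handled separately. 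Because torsion and the tensors $S,T$ are tensorial, it suffices to impose these equalities on the frame.

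For the vertical part I would evaluate (i) on the frame. As $VM'=\ker\pi_\ast$ is integrable we have $v[\partial_i,\partial_j]=[\partial_i,\partial_j]=0$, so (i) reduces to $\nabla^a_{\partial_i}\partial_j-\nabla^a_{\partial_j}\partial_i=S(\partial_i,\partial_j)$, prescribing the skew part of the vertical coefficients; the vertical part of the metric condition $\nabla^a g=0$ gives the symmetric part $\partial_i g_{jk}=g(\nabla^a_{\partial_i}\partial_j,\partial_k)+g(\partial_j,\nabla^a_{\partial_i}\partial_k)$. Combining them through the usual cyclic-permutation computation and inverting the (nondegenerate) fundamental tensor $g$ yields an explicit expression for $g(\nabla^a_{\partial_k}\partial_i,\partial_j)$ in terms of the vertical derivatives of $g$ and of $S$. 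This forces uniqueness and, after checking that the coefficients define a metric connection whose $D^a$-torsion satisfies (i), gives existence of the vertical part.

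For the horizontal part I would repeat the argument after transporting through $J_a$. Writing $\Gamma_{ijk}=g(\nabla^a_{\delta_i}\partial_j,\partial_k)$, horizontal metricity gives $\Gamma_{ijk}+\Gamma_{ikj}=\delta_i g_{jk}$ (symmetry in the last two indices), while (\ref{redii}), read on the frame and paired with $\partial_k$, gives a relation in the first two indices. The subtlety — and the main obstacle — is that $J_a$ acts on the frame with signs (and, for $a=3$, with the permutation $\alpha\leftrightarrow n+\alpha$) according to (\ref{J}), so the relation is not the plain skew-symmetry $\Gamma_{ijk}-\Gamma_{jik}$ but a sign-twisted version $\Gamma_{ijk}-\eta_{ij}\Gamma_{jik}=(\text{data built from }T\text{ and }h[\delta_i,\delta_j])$ with $\eta_{ij}=\pm1$. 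One must verify that metricity together with this twisted condition still pins down $\Gamma$; the key structural fact is that the sign factor is multiplicative, $\eta_{ij}\eta_{jk}=\eta_{ik}$, so the threefold cyclic substitution closes and produces a twisted Koszul formula for $\Gamma_{ijk}$ precisely as in the untwisted case (with the second index permuted by $\sigma$ when $a=3$). This again yields uniqueness and, upon verification, existence of the horizontal part.

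Finally, defining $\nabla^a$ on the frame by the two Koszul-type formulas and extending $\mathbb{R}$-linearly and by the Leibniz rule produces a well-defined linear connection on $V\mathcal{U}'$. By construction it is metric and its $D^a$-torsion satisfies (i) and (ii); since these relations are tensorial, their validity on the frame gives them for all $X,Y$. The explicit formulas leave no freedom in the coefficients, which establishes that $\nabla^a$ is the unique such connection.
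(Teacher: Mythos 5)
Your proposal is correct and follows essentially the same route as the paper: existence via an explicit Koszul-type formula (the paper's (\ref{eq1}) for the vertical part and (\ref{eq2}) for the horizontal part conjugated through $J_a$), and uniqueness by showing that any connection satisfying (1) and (2) must obey those same formulas. The only difference is organizational: the paper states the horizontal Koszul identity directly for $g(\nabla^a_{hX}J_ahY,J_ahZ)$, which absorbs the signs and the permutation $\alpha\leftrightarrow n+\alpha$ coming from (\ref{J}) at the outset, whereas you work with the raw frame coefficients and then untwist them via the multiplicativity of the sign factor — a valid but slightly more laborious bookkeeping of the same computation.
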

\begin{proof}
For any $X,Y,Z\in\Gamma(T{\mathcal{U}}')$ we define a linear
connection $\nabla^a$ on $V{\mathcal{U}}'$ by the following
equations.

\begin{eqnarray}
  2g(\nabla^a_{vX}vY,vZ)&=& vX(g(vY,vZ))+vY(g(vZ,vX))-vZ(g(vX,vY))\nonumber\\
  &&+ g(vY,[vZ,vX])+g(vZ,[vX,vY])-g(vX,[vY,vZ])+g(vY,S(vZ,vX))\nonumber\\
  &&+g(vZ,S(vX,vY))-g(vX,S(vY,vZ))\label{eq1}
\end{eqnarray}
and
\begin{eqnarray}
   2g(\nabla^a_{hX}J_ahY, J_ahZ)&=& hX(g(J_ahY,J_ahZ))+hY(g(J_ahZ,J_ahX))\nonumber\\
                            &&- hZ(g(J_ahX,J_ahY))+g(J_ahY,J_ah[hZ,hX])+ g(J_ahZ,J_ah[hX,hY]) \label{eq2}\\
                            &&-g(J_ahX,J_ah[hY,hZ])+g(J_ahY,T(J_ahZ,J_ahX))\nonumber  \\
                            &&+g(J_ahZ,T(J_ahX,J_ahY))-g(J_ahX,T(J_ahY,J_ahZ)).\nonumber
\end{eqnarray}
Now we show that $g$ is parallel with respect to $\nabla^a$. It is
easy to see $J_a\circ v=h\circ J_a$ and $v\circ J_a=J_a\circ h$.
After performing some computations for any
$X,Y,Z\in\Gamma(T{\mathcal{U}}')$ we have,
\begin{eqnarray}
  (\nabla^a_Xg)(vY,vZ) &=& (\nabla^a_{vX+hX}g)(vY,vZ)\nonumber\\
                     &=& vX(g(vY,vZ))-g(\nabla^a_{vX}vY,vZ)-g(vY,\nabla^a_{vX}vZ)+hX(g(vY,vZ))\nonumber\\
                     &&-g(\nabla^a_{hX}vY,vZ)-g(vY,\nabla^a_{hX}vZ)=0.\nonumber
\end{eqnarray}

So $\nabla^a$ is a metric connection.\\
Locally we set
\begin{eqnarray}
  \nabla^1_{\delta_j}\partial_i=F^k_{ij}\partial_k && \nabla^3_{\delta_j}\partial_i=\tilde{F}^k_{ij}\partial_k\\
  \nabla^1_{\partial_j}\partial_i=C^k_{ij}\partial_k && \nabla^3_{\partial_j}\partial_i=\tilde{C}^k_{ij}\partial_k\\
  S(\partial_j,\partial_i)=S^k_{ij}\partial_k&&T(\partial_j,\partial_i)=T^k_{ij}\partial_k.
\end{eqnarray}
In the relation \ref{eq1} let $X=\partial_j$, $Y=\partial_i$ and $Z=\partial_l$, then we can obtain the coefficients
$C^k_{ij}$ and $\tilde{C}^k_{ij}$ as follows:
\begin{eqnarray}
  \label{eq3} \hspace*{1.5cm} \tilde{C}^k_{ij}=C^k_{ij}=\frac{1}{2}g^{lk}\{\partial_j g_{il}+\partial_i g_{lj}-\partial_l g_{ji}
  +S^h_{jl}g_{ih}+S^h_{ij}g_{lh}-S^h_{li}g_{jh}\}.
\end{eqnarray}
By attention to the relation \ref{J}, for computing the
coefficients $F^k_{ij}$ and $\tilde{F}^k_{ij}$ we must consider
four cases for any connection $\nabla^a$ as follows:
\begin{eqnarray}
  && F^k_{\alpha \beta}, F^k_{\alpha \ n+\beta}, F^k_{n+\alpha \ \beta}, F^k_{n+\alpha \ n+\beta} \\
  && \tilde{F}^k_{\alpha \beta}, \tilde{F}^k_{\alpha \ n+\beta}, \tilde{F}^k_{n+\alpha \ \beta}, \tilde{F}^k_{n+\alpha \
  n+\beta}.
\end{eqnarray}
\textbf{The Computation of $F^k_{\alpha \beta}$ and $\tilde{F}^k_{n+\alpha
\beta}$.}\\
In the equation \ref{eq2} let $X=\delta_\beta$, $Y=\delta_\alpha$ and $Z=\delta_{\theta}$.
Then we have,
\begin{eqnarray}
  2F^k_{\alpha\beta} g_{k\theta} &=& \delta_{\beta} g_{\alpha\theta}+\delta_{\alpha} g_{\theta\beta}-\delta_{\theta} g_{\beta\alpha}
             +T^h_{\beta\theta} g_{\alpha h}+T^h_{\alpha\beta} g_{\theta h}-T^h_{\theta\alpha} g_{\beta h} \label{XYZ111}\\
  2\tilde{F}^k_{n+\alpha \ \beta}  g_{k \ n+\theta} &=& \delta_{\beta} g_{n+\alpha \ n+\theta}+\delta_{\alpha} g_{n+\theta \ n+\beta}-\delta_{\theta} g_{n+\beta \
             n+\alpha}\label{XYZ113}\\
  && -T^h_{n+\beta \ n+\theta} g_{n+\alpha \ h}-T^h_{n+\alpha \ n+\beta} g_{n+\theta \ h}+T^h_{n+\theta \ n+\alpha} g_{n+\beta \
             h}\nonumber.
\end{eqnarray}
Now for the same $X$ and $Y$ let $Z=\delta_{n+\theta}$, after some computations we have
\begin{eqnarray}
  2F^k_{\alpha\beta} g_{k \ n+\theta} &=& \delta_{\beta} g_{\alpha \ n+\theta}+\delta_{\alpha} g_{n+\theta \ \beta}+\delta_{n+\theta}
             g_{\beta\alpha}\label{XYZ121}\\
  &&+T^h_{\beta \ n+\theta} g_{\alpha h}+T^h_{\alpha\beta} g_{n+\theta h}-T^h_{n+\theta \ \alpha} g_{\beta h} \nonumber\\
  2\tilde{F}^k_{n+\alpha \ \beta}  g_{k \theta} &=& \delta_{\beta} g_{n+\alpha \ \theta}+\delta_{\alpha} g_{\theta \ n+\beta}-\delta_{n+\theta} g_{n+\beta \
             n+\alpha}\label{XYZ123}\\
  && -T^h_{n+\beta \ \theta} g_{n+\alpha \ h}-T^h_{n+\alpha \ n+\beta} g_{\theta h}+T^h_{\theta \ n+\alpha} g_{n+\beta \
             h}\nonumber.
\end{eqnarray}
Now the relations (\ref{XYZ111},\ref{XYZ121}) and the relations
(\ref{XYZ113},\ref{XYZ123}) respectively show that
\begin{eqnarray}
  F^k_{\alpha\beta} &=& \frac{1}{2}g^{kl}\{\delta_\beta g_{\alpha l}+\delta_\alpha g_{l\beta}
                         +T^h_{\beta l}g_{\alpha h}+T^h_{\alpha\beta}g_{lh}-T^h_{l\alpha}g_{\beta h}\} \\
                         &&-\frac{1}{2}(g^{k \gamma}\delta_{\gamma}-g^{k \ n+\gamma}\delta_{n+\gamma})g_{\beta\alpha}\nonumber\\
  \tilde{F}^k_{n+\alpha \ \beta} &=& \frac{1}{2}g^{kl}\{\delta_\beta g_{n+\alpha \ l}+\delta_\alpha g_{l \ n+\beta}
                         -T^h_{n+\beta \ l}g_{n+\alpha \ h}-T^h_{n+\alpha \ n+\beta}g_{lh}\\
                         &&+T^h_{l \ n+\alpha}g_{n+\beta \ h}\}-\frac{1}{2}(g^{k \gamma}\delta_{n+\gamma}+g^{k \ n+\gamma}\delta_{\gamma})g_{n+\beta \ n+\alpha}\nonumber.
\end{eqnarray}
\textbf{The Computation of $F^k_{\alpha \ n+\beta}$ and $\tilde{F}^k_{n+\alpha
 \ n+\beta}$.}\\
Similar to the pervious case in the equation \ref{eq2}, let
$X=\delta_{n+\beta}$, $Y=\delta_\alpha$ and $Z=\delta_{\theta}$.
Then we have,
\begin{eqnarray}
  2F^k_{\alpha \ n+\beta} g_{k\theta} &=& \delta_{n+\beta} g_{\alpha\theta}-\delta_{\alpha} g_{\theta \ n+\beta}+\delta_{\theta} g_{n+\beta \ \alpha}\label{XYZ211}\\
             &&-T^h_{n+\beta \ \theta} g_{\alpha h}-T^h_{\alpha \ n+\beta} g_{\theta h}+T^h_{\theta\alpha} g_{n+\beta \ h}\nonumber \\
  2\tilde{F}^k_{n+\alpha \ n+\beta}  g_{k \ n+\theta} &=& \delta_{n+\beta} g_{n+\alpha \ n+\theta}+\delta_{\alpha} g_{n+\theta \ \beta}-\delta_{\theta} g_{\beta \ n+\alpha}\label{XYZ213}\\
  && -T^h_{\beta \ n+\theta} g_{n+\alpha \ h}-T^h_{n+\alpha \ \beta} g_{n+\theta \ h}+T^h_{n+\theta \ n+\alpha} g_{\beta \ h}\nonumber.
\end{eqnarray}
Now for $X=\delta_{n+\beta}$, $Y=\delta_\alpha$ let
$Z=\delta_{n+\theta}$, then we have
\begin{eqnarray}
  2F^k_{\alpha \ n+\beta} g_{k \ n+\theta} &=& \delta_{n+\beta} g_{\alpha \ n+\theta}-\delta_{\alpha} g_{n+\theta \ n+\beta}-\delta_{n+\theta} g_{n+\beta \ \alpha}\label{XYZ221}\\
  &&-T^h_{n+\beta \ n+\theta} g_{\alpha h}-T^h_{\alpha \ n+\beta} g_{n+\theta h}+T^h_{n+\theta \ \alpha} g_{n+\beta h} \nonumber\\
  2\tilde{F}^k_{n+\alpha \ n+\beta}  g_{k \theta} &=& \delta_{n+\beta} g_{n+\alpha \ \theta}+\delta_{\alpha} g_{\theta \ \beta}-\delta_{n+\theta} g_{\beta \ n+\alpha}\label{XYZ223}\\
  && -T^h_{\beta \ \theta} g_{n+\alpha \ h}-T^h_{n+\alpha \ \beta} g_{\theta h}+T^h_{\theta \ n+\alpha} g_{\beta \ h}\nonumber.
\end{eqnarray}
Now the relations (\ref{XYZ211},\ref{XYZ221}) and the relations
(\ref{XYZ213},\ref{XYZ223}) respectively show that
\begin{eqnarray}
  F^k_{\alpha \ n+\beta} &=& \frac{1}{2}g^{kl}\{\delta_{n+\beta} g_{\alpha l}-\delta_\alpha g_{l \ n+\beta}
                         -T^h_{n+\beta \ l}g_{\alpha h}-T^h_{\alpha \ n+\beta}g_{lh}+T^h_{l\alpha}g_{n+\beta h}\} \\
                         &&+\frac{1}{2}(g^{k \gamma}\delta_{\gamma}-g^{k \ n+\gamma}\delta_{n+\gamma})g_{n+\beta \ \alpha}\nonumber\\
  \tilde{F}^k_{n+\alpha \ n+\beta} &=& \frac{1}{2}g^{kl}\{\delta_{n+\beta} g_{n+\alpha \ l}+\delta_\alpha g_{l \ \beta}
                         -T^h_{\beta \ l}g_{n+\alpha \ h}-T^h_{n+\alpha \ \beta}g_{lh}\\
                         &&+T^h_{l \ n+\alpha}g_{\beta \ h}\}-\frac{1}{2}(g^{k \gamma}\delta_{n+\gamma}+g^{k \ n+\gamma}\delta_{\gamma})g_{\beta \ n+\alpha}\nonumber.
\end{eqnarray}
\textbf{The Computation of $F^k_{n+\alpha \ \beta}$ and $\tilde{F}^k_{\alpha\beta}$.}\\
In \ref{eq2} consider $X=\delta_{\beta}$, $Y=\delta_{n+\alpha}$
and $Z=\delta_{\theta}$. Then we have
\begin{eqnarray}
  2F^k_{n+\alpha \ \beta} g_{k\theta} &=& \delta_{\beta} g_{n+\alpha \ \theta}-\delta_{n+\alpha} g_{\theta \ \beta}-\delta_{\theta} g_{\beta \ n+\alpha}\label{XYZ311}\\
             &&+T^h_{\beta \ \theta} g_{n+\alpha h}+T^h_{n+\alpha \ \beta} g_{\theta h}-T^h_{\theta \ n+\alpha} g_{\beta \ h}\nonumber \\
  2\tilde{F}^k_{\alpha\beta}  g_{k \ n+\theta} &=& \delta_{\beta} g_{\alpha \ n+\theta}+\delta_{n+\alpha} g_{n+\theta \ n+\beta}-\delta_{\theta} g_{n+\beta \ \alpha}\label{XYZ313}\\
  && -T^h_{n+\beta \ n+\theta} g_{\alpha \ h}-T^h_{\alpha \ n+\beta} g_{n+\theta \ h}+T^h_{n+\theta \ \alpha} g_{n+\beta \ h}\nonumber.
\end{eqnarray}
Now for the same $X$ and $Y$ let $Z=\delta_{n+\theta}$, then we
have
\begin{eqnarray}
  2F^k_{n+\alpha \ \beta} g_{k \ n+\theta} &=& \delta_{\beta} g_{n+\alpha \ n+\theta}-\delta_{n+\alpha} g_{n+\theta \ \beta}+\delta_{n+\theta} g_{\beta \ n+\alpha}\label{XYZ321}\\
  &&+T^h_{\beta \ n+\theta} g_{n+\alpha h}+T^h_{n+\alpha \ \beta} g_{n+\theta h}-T^h_{n+\theta \ n+\alpha} g_{\beta h} \nonumber\\
  2\tilde{F}^k_{\alpha \ \beta}  g_{k \theta} &=& \delta_{\beta} g_{\alpha \ \theta}+\delta_{n+\alpha} g_{\theta \ n+\beta}-\delta_{n+\theta} g_{n+\beta \ \alpha}\label{XYZ323}\\
  && -T^h_{n+\beta \ \theta} g_{\alpha \ h}-T^h_{\alpha \ n+\beta} g_{\theta h}+T^h_{\theta \ \alpha} g_{n+\beta \ h}\nonumber.
\end{eqnarray}
Equations (\ref{XYZ311},\ref{XYZ321}) and equations
(\ref{XYZ313},\ref{XYZ323}) respectively show that
\begin{eqnarray}
  F^k_{n+\alpha \ \beta} &=& \frac{1}{2}g^{kl}\{\delta_{\beta} g_{n+\alpha \ l}-\delta_{n+\alpha} g_{l\beta}
                         +T^h_{\beta l}g_{n+\alpha \ h}+T^h_{n+\alpha \ \beta}g_{lh}-T^h_{l \ n+\alpha}g_{\beta h}\} \\
                         &&+\frac{1}{2}(g^{k \ n+\gamma}\delta_{n+\gamma}-g^{k \ \gamma}\delta_{\gamma})g_{\beta \ n+\alpha}\nonumber\\
  \tilde{F}^k_{\alpha\beta} &=& \frac{1}{2}g^{kl}\{\delta_{\beta} g_{\alpha l}+\delta_{n+\alpha} g_{l \ n+\beta}
                         -T^h_{n+\beta \ l}g_{\alpha h}-T^h_{\alpha \ n+\beta}g_{lh}\\
                         &&+T^h_{l\alpha}g_{n+\beta \ h}\}-\frac{1}{2}(g^{k \gamma}\delta_{n+\gamma}+g^{k \ n+\gamma}\delta_{\gamma})g_{n+\beta \ \alpha}\nonumber.
\end{eqnarray}
\textbf{The Computation of $F^k_{n+\alpha \ n+\beta}$ and $\tilde{F}^k_{\alpha\ n+\beta}$.}\\
Now for the last time in equation \ref{eq2}, let
$X=\delta_{n+\beta}$, $Y=\delta_{n+\alpha}$ and
$Z=\delta_{\theta}$. Then we have,
\begin{eqnarray}
  2F^k_{n+\alpha \ n+\beta} g_{k\theta} &=& \delta_{n+\beta} g_{n+\alpha \ \theta}+\delta_{n+\alpha} g_{\theta \ n+\beta}+\delta_{\theta} g_{n+\beta \ n+\alpha}\label{XYZ411}\\
             &&-T^h_{n+\beta \ \theta} g_{n+\alpha h}-T^h_{n+\alpha \ n+\beta} g_{\theta h}-T^h_{\theta \ n+\alpha} g_{n+\beta \ h}\nonumber \\
  2\tilde{F}^k_{\alpha \ n+\beta}  g_{k \ n+\theta} &=& \delta_{n+\beta} g_{\alpha \ n+\theta}+\delta_{n+\alpha} g_{n+\theta \ \beta}-\delta_{\theta} g_{\beta\alpha}\label{XYZ413}\\
  && -T^h_{\beta \ n+\theta} g_{\alpha h}-T^h_{\alpha\beta} g_{n+\theta \ h}+T^h_{n+\theta \ \alpha} g_{\beta h}\nonumber.
\end{eqnarray}
Similar to the pervious cases for the same $X$ and $Y$, let
$Z=\delta_{n+\theta}$, after some computations we have
\begin{eqnarray}
  2F^k_{n+\alpha \ n+\beta} g_{k \ n+\theta} &=& \delta_{n+\beta} g_{n+\alpha \ n+\theta}+\delta_{n+\alpha} g_{n+\theta \ n+\beta}-\delta_{n+\theta} g_{n+\beta \ n+\alpha}\label{XYZ421}\\
  &&-T^h_{n+\beta \ n+\theta} g_{n+\alpha h}-T^h_{n+\alpha \ n+\beta} g_{n+\theta h}+T^h_{n+\theta \ n+\alpha} g_{n+\beta h} \nonumber\\
  2\tilde{F}^k_{\alpha \ n+\beta}  g_{k\theta} &=& \delta_{n+\beta} g_{\alpha\theta}+\delta_{n+\alpha} g_{\theta\beta}-\delta_{n+\theta} g_{\beta\alpha}\label{XYZ423}\\
  && -T^h_{\beta\theta} g_{\alpha h}-T^h_{\alpha\beta} g_{\theta h}+T^h_{\theta\alpha} g_{\beta h}\nonumber.
\end{eqnarray}
Now by the equations (\ref{XYZ411},\ref{XYZ421}) and
(\ref{XYZ413},\ref{XYZ423}) we respectively have
\begin{eqnarray}
  F^k_{n+\alpha \ n+\beta} &=& \frac{1}{2}g^{kl}\{\delta_{n+\beta} g_{n+\alpha \ l}+\delta_{n+\alpha} g_{l \ n+\beta}
                         -T^h_{n+\beta \ l}g_{n+\alpha \ h}\nonumber\\
                         &&-T^h_{n+\alpha \ n+\beta}g_{lh}+T^h_{l \ n+\alpha}g_{n+\beta \ h}\} \\
                         &&+\frac{1}{2}(g^{k\gamma}\delta_{\gamma}-g^{k \ n+\gamma}\delta_{n+\gamma})g_{n+\beta \ n+\alpha}\nonumber\\
  \tilde{F}^k_{\alpha \ n+\beta} &=& \frac{1}{2}g^{kl}\{\delta_{n+\beta} g_{\alpha l}+\delta_{n+\alpha} g_{l\beta}
                         -T^h_{\beta l}g_{\alpha h}-T^h_{\alpha\beta}g_{lh}\\
                         &&+T^h_{l\alpha}g_{\beta h}\}-\frac{1}{2}(g^{k \gamma}\delta_{n+\gamma}+g^{k \ n+\gamma}\delta_{\gamma})g_{\beta\alpha}\nonumber.
\end{eqnarray}
Now by using the relations $J_a\circ v=h\circ J_a$, $v\circ
J_a=J_a\circ h$ and \ref{torsion of D13}, we have
\begin{eqnarray}
T^{D^{a}}(vX,vY)&=&\nabla_{vX}vY-\nabla_{vY}vX-[vX,vY]\label{vDTorsion}\\
hT^{D^{a}}(hX,hY)&=&J_a(\nabla_{hY}J_ahX-\nabla_{hX}J_ahY+J_ah[hX,hY])\label{hDTorsion}.
\end{eqnarray}

Suppose that $X,Y\in\Gamma(T{\mathcal{U}}')$ are two arbitrary
vector fields on ${\mathcal{U}}'$ which have the following
representations in local coordinates:
\begin{eqnarray}
  X&=&z^\alpha\delta_\alpha+z^{n+\alpha}\delta_{n+\alpha}+w^\alpha\partial_\alpha+w^{n+\alpha}\partial_{n+\alpha}\\
  Y&=&\tilde{z}^\alpha\delta_\alpha+\tilde{z}^{n+\alpha}\delta_{n+\alpha}+\tilde{w}^\alpha\partial_\alpha+\tilde{w}^{n+\alpha}\partial_{n+\alpha}.
\end{eqnarray}
After performing some computations we have:
\begin{eqnarray}
  T^{D^a}(vX,vY)&=&S(vX,vY)\label{TDa}\\
  hT^{D^a}(hX,hY)&=&J_aT(J_ahY,J_ahX)\label{hTDa}.
\end{eqnarray}
The last equations show that $\nabla^a$ satisfies the second
condition of the theorem.\\
In this step we show that $\nabla^a$ is unique. Let
$\tilde{\nabla}^a$ be another linear connection on
$V{\mathcal{U}}'$ satisfying conditions $(1)$ and $(2)$.\\
By the equation $(\tilde{\nabla}^a_{vX}g)(vY,vZ)=0$ we have
\begin{eqnarray}
  vX(g(vY,vZ))=g(\tilde{\nabla}^a_{vX}vY,vZ)+g(vY,\tilde{\nabla}^a_{vX}vZ),
\end{eqnarray}
and so,
\begin{eqnarray}
  && vX(g(vY,vZ))+vY(g(vZ,vX))- vZ(g(vX,vY))=\nonumber \\
  &&\hspace*{1cm}g(2\tilde{\nabla}^a_{vY}vX+T^{\tilde{D}^a}(vX,vY)+[vX,vY],vZ)\label{unicue1}\\
  &&\hspace*{1cm}+g(T^{\tilde{D}^a}(vX,vZ)+[vX,vZ],vY)+g(T^{\tilde{D}^a}(vZ,vY)+[vZ,vY],vX),\nonumber
\end{eqnarray}
on the other hand we have,
$(\tilde{\nabla}^a_{hX})(vJ_aY,vJ_aZ)=0$, which shows that,
\begin{eqnarray}
  & &\hspace*{0.5cm}hX(g(vJ_aY,vJ_aZ))+hY(g(vJ_aZ,vJ_aX))-hZ(g(vJ_aX,vJ_aY))\label{unicue2}\\
  & & \hspace*{0.6cm}=g(J_ahT^{\tilde{D}^a}(hX,hY)+2\tilde{\nabla}^a_{hY}J_ahX+J_ah[hX,hY],J_ahZ)\nonumber\\
  & & \hspace*{1cm}+g(J_ahT^{\tilde{D}^a}(hX,hZ)+J_ah[hX,hZ],J_ahY)\nonumber\\
  & & \hspace*{1cm}+g(-J_ahT^{\tilde{D}^a}(hZ,hY)-J_ah[hZ,hY],J_ahX)\nonumber,
\end{eqnarray}
where $\tilde{D}^a$ is the linear connection induced by
$\tilde{\nabla}^a$ and theorem \ref{D}.\\
The relations \ref{unicue1} and \ref{unicue2} show that
$\tilde{\nabla}^a$ satisfies \ref{eq1} and \ref{eq2},
respectively. Therefore $\nabla^a=\tilde{\nabla}^a$.
\end{proof}

It is a natural question  whether we can use $J_2$ to introduce
a new connection similar to $J_1$ and $J_3$?\\
The answer is negative because $J_2$ dose not involve with the
non-linear connection and we can not introduce a connection $D^2$
in a similar way to $D^1$ and $D^3$ but we can have the following
theorem.
\begin{theorem}
Suppose that $\nabla$ is a Finsler connection on ${\mathcal{U}}'$.
Then the differential operator $D$ which is defined by
\begin{eqnarray}
  D_XY=\frac{1}{2}\{\nabla_XvY-J_1\nabla_XJ_1hY-J_2\nabla_XJ_2vY-J_3\nabla_XJ_3hY\},
\end{eqnarray}
where $X,Y\in\Gamma(T{\mathcal{U}}')$, is a linear connection on
${\mathcal{U}}'$. Also $J_1$, $J_2$ and $J_3$ are parallel with
respect to $D$.
\end{theorem}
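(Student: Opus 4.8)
The plan is to verify the two connection axioms and then the three parallelism conditions $D_X(J_iY)=J_iD_XY$, $i=1,2,3$. Everything is driven by the commutation relations read off from \eqref{J}: the operators $J_1$ and $J_3$ interchange the two distributions, $J_a\circ v=h\circ J_a$ and $J_a\circ h=v\circ J_a$ for $a=1,3$, whereas $J_2$ preserves them, $J_2\circ v=v\circ J_2$ and $J_2\circ h=h\circ J_2$. Combined with the quaternion identities $J_1J_2=J_3=-J_2J_1$ and $J_i^2=-Id$, these yield every product $J_iJ_j$ needed below.

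Since $h,v,J_1,J_2,J_3$ are tensorial and $\nabla$ is function-linear in its direction argument, additivity in both slots and the identity $D_{fX}Y=fD_XY$ are immediate. For the derivation property I would expand $D_X(fY)$, using $v(fY)=f\,vY$, $h(fY)=f\,hY$ and the tensoriality of the $J_i$ to pull $f$ past the structure operators, and then apply the Leibniz rule of $\nabla$. This produces $fD_XY$ together with four derivative terms, each of the shape $\pm(Xf)\,J_i^2(\,\cdot\,)$; because $J_i^2=-Id$ each of these becomes $+(Xf)$ times a projection, so the four contributions sum to $\tfrac12(Xf)(vY+hY+vY+hY)=(Xf)Y$. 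Hence $D_X(fY)=fD_XY+(Xf)Y$ and $D$ is a linear connection; the only care required is keeping the four signs correct so that the square terms reinforce rather than cancel.

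For parallelism I would treat each $J_i$ by the same mechanism. Taking $i=1$ and substituting $v(J_1Y)=J_1hY$, $h(J_1Y)=J_1vY$ into the definition, then simplifying the four arguments of $\nabla$ with $J_1^2=-Id$, $J_2J_1=-J_3$ and $J_3J_1=J_2$, rewrites the defining expression as $D_X(J_1Y)=\tfrac12\{J_1\nabla_XvY+\nabla_XJ_1hY+J_2\nabla_XJ_3hY-J_3\nabla_XJ_2vY\}$. Expanding $J_1D_XY$ directly and using $J_1J_2=J_3$, $J_1J_3=-J_2$ gives precisely these same four terms, so $D_XJ_1=0$. The case $i=2$ uses that $J_2$ commutes with $h$ and $v$ together with $J_2J_1=-J_3$, $J_2J_3=J_1$, and the case $i=3$ is the mirror of $i=1$ with $J_1J_3=-J_2$, $J_3J_1=J_2$; both collapse in the same way to give $D_XJ_2=0$ and $D_XJ_3=0$.

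The computation is essentially bookkeeping, and the one genuinely delicate point is tracking signs through the products $J_iJ_j$, since a single slip would destroy the cancellations. The conceptual reason all three structures can be made parallel simultaneously---unlike $D^1$ and $D^3$, each of which handles only one $J_a$---is the mixed design of $D$: the two terms built from the swapping operators $J_1,J_3$ act on $hY$, while the term built from the preserving operator $J_2$ acts on $vY$, and this precise matching is what forces $D_X(J_iY)-J_iD_XY$ to vanish for every $i$.
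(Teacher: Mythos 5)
Your proposal is correct and follows essentially the same route as the paper: direct verification of the two connection axioms (with the four $J_i^2=-Id$ terms reinforcing to give $(Xf)Y$), followed by expanding $D_X(J_iY)$ and $J_iD_XY$ via the quaternion identities and the swap/preserve relations of $J_1,J_3$ versus $J_2$ with $h$ and $v$. In fact you supply more detail than the paper, which dismisses the parallelism computation as ``a simple computation''; your explicit four-term matching for $J_1$ (and the sign bookkeeping through $J_iJ_j$) is exactly what that omitted computation amounts to.
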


\begin{proof}
For any $X, Y, Z \in\Gamma(T{\mathcal{U}}')$ and $f\in
C^\infty({\mathcal{U}}')$ we have
\begin{eqnarray}
  D_{fX+Y}Z&=&\frac{1}{2}\{f\nabla_XvZ+\nabla_YvZ-J_1(f\nabla_XJ_1hZ+\nabla_YJ_1hZ)\\
  &&-J_2(f\nabla_XJ_2vZ+\nabla_YJ_2vZ)-J_3(f\nabla_XJ_3hZ+\nabla_YJ_3hZ)\}\nonumber\\
  &=&fD_XZ+D_YZ\nonumber
\end{eqnarray}
\begin{eqnarray}
  D_X(fY+Z) &=& \frac{1}{2}\{Xf(vY)+f\nabla_XvY+\nabla_XvZ-J_1(Xf(J_1hY)\nonumber\\
  &&+f\nabla_XJ_1hY+\nabla_XJ_1hZ)-J_2(Xf(J_2vY)+f\nabla_XJ_2vY+\nabla_XJ_2vZ)\nonumber\\
  &&-J_3(Xf(J_3hY)+f\nabla_XJ_3hY+\nabla_XJ_3hZ)\}\\
  &=&\frac{1}{2}(2Xf(vY+hY))+fD_XY+D_XZ\nonumber\\
  &=&Xf(Y)+fD_XY+D_XZ.\nonumber
\end{eqnarray}
Therefore $D$ is a linear connection on ${\mathcal{U}}'$.\\
Now we show that the almost hypercomplex structure is parallel
with respect to $D$.\\
Let $Y$ be a vector field on ${\mathcal{U}}'$ which have the
representation
$Y=x^\alpha\delta_\alpha+y^\alpha\delta_{n+\alpha}+z^\alpha\partial_\alpha+w^\alpha\partial_{n+\alpha}$
in local coordinates. Then by a simple computation we have
\begin{eqnarray}
  (D_XJ_1)(Y) &=& D_X(J_1(Y))-J_1D_XY\nonumber \\
              &=&\frac{1}{2}\{\nabla_XvJ_1Y-J_1\nabla_XJ_1hJ_1Y-J_2\nabla_XJ_2vJ_1Y-J_3\nabla_XJ_3hJ_1Y\\
              &&-J_1\{\nabla_XvY-J_1\nabla_XJ_1hY-J_2\nabla_XJ_2vY-J_3\nabla_XJ_3hY\}\}\nonumber\\
              &=&0\nonumber
\end{eqnarray}
In a similar way we can show $(D_XJ_2)(Y)=(D_XJ_3)(Y)=0$. Hence
$J_1$, $J_2$ and $J_3$ are parallel with respect to $D$.
\end{proof}


\bibliographystyle{amsplain}

\end{document}